\documentclass[12pt]{article}

\usepackage[pdftex]{graphicx,color}
\graphicspath{{pic/}}
\DeclareGraphicsExtensions { . jpg , . png , . JPG}
\usepackage[centertags]{amsmath}
\usepackage[T1]{fontenc}
\usepackage{amsfonts}
\usepackage{amssymb}
\usepackage{amsthm}
\usepackage{newlfont}

\textwidth 16cm \textheight 22cm \oddsidemargin -.25in
\evensidemargin -.25in \topskip 0cm \footskip 1cm

\newlength{\defbaselineskip}
\setlength{\defbaselineskip}{\baselineskip}
\newcommand{\setlinespacing}[1]%
           {\setlength{\baselineskip}{#1 \defbaselineskip}}


%

\theoremstyle{plain}
\newtheorem{thm}{Theorem}[section]
\newtheorem{cor}[thm]{Corollary}

\newtheorem{prop}[thm]{Proposition}
\theoremstyle{definition}
\newtheorem{defn}{Definition}[section]
\theoremstyle{remark}
\newtheorem{rem}{Remark}[section]
\numberwithin{equation}{section}

\title{The homogenous tree as an electric network} 
\author{Alice Vatamanelu\\
Dept. of Mathematics\\
Purdue University\\
vatamanelu@gmail.com} 
\date{\today} 

\begin{document}
\maketitle 
\abstract{Let T be an infinite homogenous tree of homogeneity $q+1$. Attaching to each edge the conductance $1$, the tree will became an electric network. The reversible Markov chain associated to this network is the simple random walk on the homogenous tree. Using results regarding the equivalence between a reversible Markov chain and an electric network, we will express voltages, currents, the Green fuction hitting times, transitions number, probabilities of reaching a set before another, as functions of the distance on the homogenous tree. This connection enables us to give simpler proofs for the properties of the random walk under discussion.}

\section{Introduction}   

It is well-known that every reversible Markov chain equivalent to an electric network (see \cite{DS}, \cite{MR0124932}). More precisely, let us consider a reversible Markov chain of state space $\mathbf{G}$ ($\exists \pi : \mathbf{G}\rightarrow ]0,\infty[,\quad \pi(x)p_{xy}=\pi(y)p_{yx}\quad \forall x,y$) and transition matrix $P=(P_{xy})_{x,y\in \mathbf{G}}$. We will associate to this chain an electric network (a weighted graph, the weights being thought as conductances) in the following way: we take the vertex set of the graph to be $\mathbf{G}$, we define $x\sim y$ if $p_{xy}> 0$ and take the weights to be $c(x,y)=\prod(x)p_{xy}$. So $(\mathbf{G},\sim,c)$ will become a weighted graph. Conversely, let's start from a weighted graph $(\mathbf{G},\sim,c)$. We will consider il to be countable and satisfying $\sum c(x,y)<\infty$.\\
The associated reversible Markov chain will be created as follows: we will take $\mathbf{G}$ as a state space, we will define its transition matrix to be $P=(p_{xy})_{x,y\in \mathbf{G}}$, $p_{xy}=\frac{c(x,y)}{\sum_{y\sim x}c(x,y)}$ for $y\sim x$ and $0$ otherwise. We will take the reversibility function $\pi$ to be $\pi(x)=\sum_{y\sim x}c(x,y)$. One can easily check that the matrix $P$ is stochastic and that the function $\pi$ satisfies the condition $\pi(x)p_{xy}=\pi(y)p_{yx}\quad \forall x,y$. So $(\mathbf{G},P,\pi)$ will be a reversible Markov chain.\\
Taking into account this equivalence, we will not make any distinction in what follows between the notions of reversible Markov chain and electric network.\\
Applying the procedure above to the network obtained by attaching unit conductances to the edges of the homogenous tree, we will obtain that its attached Markov chain is the one having the matrix $P=(p_{xy})_{x,y\in T}$, $p_{x,y}=\frac{1}{q+1}$ for $y\sim x$ and $0$ otherwise. This is exactly the simple random walk on the tree (For more details on random walks on trees we refer to \cite{MR1001523} and \cite{MR1743100}).\\
The paper will analyse this process viewing it as an electric network. We will obtain our main results using electric network techniques.

\

\textbf{Acknowledgments}: The author kindly thanks M. Abbassi and F. Baudoin for their kind help.

\  

\section{The probabilistic framework}
The probabilistic framework of this paper will be the following:\\
Let us consider an irreducible and reversible Markov chain, having the following elements:\\
\textbf{The state space:} $\mathbf{G}$\\
\textbf{The transition matrix:}
\[P=(p_{xy})_{x,y\in \mathbf{G}}\]
\textbf{The reversibility function:}
\[\pi:\mathbf{G}\rightarrow \mathbb{R}^{*}, \pi(x)p_{xy}=\pi(y)p_{xy}\quad \forall x,y\]
\textbf{The equivalent states:}
\[x\sim y\Leftrightarrow^{def}p_{xy}>0\]
\textbf{The hitting vector} of a set $M\subset \mathbf{G}$:
\[T_{M}=\inf\{n\geq 1: X_{n}\in M\}.\]
When we have $M=\{a\}$, the hitting time will be denoted by $\tau_{a}$.\\
Let us distinguish between $\tau_{M}$ and $T_{M}$, where $T_{M}$ is the following:
\[T_{M}=\inf\{n\geq 1: X_{n}\in M\}.\]
When we have $M=\{a\}$, $T_{M}$ will be denoted by $T_{a}$.\\
\textbf{The hitting vector} of a set $M\subset \mathbf{G}$:
\[\nu^{M}:=\left(P_{x}(\tau_{M}<\infty)\right)_{x\in\mathbf{G}}.\]
\textbf{The transience:}\\
The chain is transient $\Leftrightarrow^{def}P_{a}(T_{a}<\infty)<1\quad \forall a$\\
\textbf{The Green function:}
\[G(x,y):=\delta_{xy}+p_{xy}+p_{xy}^{2}+p_{xy}^{3}+...,\quad \forall x,y \in \mathbf{G},\]
Where $p_{xy}^{n}$ is the $xy$ element of matrix $p^{n}$. The Green function can also be expressed in the following way (see [S], theorem 10.1):
\[G(x,y)=E_{x}[\sum_{n\geq 0}1_{x_{n}=y}].\]
\textbf{The harmonic functions in a point x:}
\[f:\mathbf{G}\rightarrow \mathbb{R},\text{ having } Pf(x)=f(x),\]
\[\text{that is }\sum_{y\sim x}p_{xy}f(y)=f(x).\]
\textbf{The harmonic functions on a set} $\mathbf{M}\in \mathbf{G}$:\\
\[f:\mathbf{G}\rightarrow\mathbb{R},\text{ f being harmonic in every } x\in M.\]
\textbf{The transitions number} from $x$ to $y$ (after one step):\\
\[S_{xy}:=\sum_{n\geq 0}1_{\{x_{n}=x,x_{n+1}=y\}}, \text{ for } x\sim y.\]
\section{The electric network framework (first part)}
The first part of the electric network framework of this paper will be described by the following definition:
\begin{defn}
Let us consider an electric network having the following elements:\\
a) $\mathbf{G}$: \textbf{the vertex set.}\\
b) E: \textbf{the edges set} (E contains its edge with its both possible orientations).\\
c) v: \textbf{the voltage function} that appears as a consequence of hooking up a battery between two vertices of the network ($v: \mathbf{G}\rightarrow R$).\\
d) i: \textbf{the electric current intensity function} that appears as a cosequence of hooking up a battery between two vertices of the network ($i: E\rightarrow R$).\\
e) c: \textbf{the conductance function} defined on the set of the oriented edges of the graph ($c: E\rightarrow R$); c is a symmetric function.\\
f) r: \textbf{the resistance function} defined on the set of the oriented edges of the graph ($r: E\rightarrow R$); r is the inverse of the function c, so it is symmetric.\\
g) $C(a\leftrightarrow Z)$ \textbf{the effective conductance from} $\mathbf{a}$ \textbf{to} $\mathbf{Z}$ where $\{a\}$ and $Z$ are two disjoint subsets of $\mathbf{G}$, this notion is defined for a finite graph; it will be the ratio between the sum of the currents that enter the current through the point $a$ and the strictly positive voltage in $a$ when hooking up a battery between $a$ and $Z$ with $v/Z=a$:
\[C(a\leftrightarrow Z)=\frac{\sum_{x\sim a}i(a,x)}{v(a)}.\]
h) $R(a\leftrightarrow Z)$: \textbf{the efective resistance from} $\mathbf{a}$ \textbf{to} $\mathbf{Z}$, where $\{a\}$ and $Z$ are two disjoint subsets of $\mathbf{G}$, this notion is defined for a finite graph, to be the inverse of the effective conductance from $a$ to $Z$.\\
i) $C (a\leftrightarrow Z)$: \textbf{the effective conductance from} $\mathbf{a}$ \textbf{to infinity}, this notion is defined for an infinite graph G; for defining it, we have to exhaust the graph $\mathbf{G}$ by a sequence $(\mathbf{G}_{n})_{n\geq 0}$ of finite subgraphs. This can be done, for instance, as follows:
\[\mathbf{G}_{0}:=\{0\} \text{ (an arbitrarily fixed root)}\]
\[\mathbf{G}_{n}:=\mathbf{G}_{n-1}\cup\{y:\exists x\in\mathbf{G}, y\sim x\},\quad \forall n\geq 1.\]
One can easily check that $\mathbf{G}_{n-1}\leq \mathbf{G}_{n}\quad \forall n\geq 1$ and that $\cup_{n}\mathbf{G}_{n}=\mathbf{G}$.\\
Let us now denote $\mathbf{G}\backslash\mathbf{G}_{n}$ by $Z_{n}$. For each $n\geq 1$, we identify all the vertices of $C(a\leftrightarrow z_{n})$ the effective conductance from $a$ to $z_{n}$ in $\mathbf{G}^{(n)}$.\\
Finally, we define $C(a\leftrightarrow \infty)$ as follows:
\[C(a\leftrightarrow \infty):=\lim_{n}C(a\leftrightarrow z_{n}).\]
j) $R(a\leftrightarrow \infty)$: \textbf{the effective resistance from} $\mathbf{a}$ \textbf{to infinity.}\\
We define this notion for an infinite graph; it will be the inverse of the effective conductance from $a$ to infinity.
\end{defn}
In the above framework, it will be useful for the purpose of this paper to present the following characterisation of transience by means of conductances (see [LP], theorem 2.3).
\begin{thm}
An infinite electric network $\mathbf{G}$ is \textbf{transient iff the effective conductance from any vertex to infinity is strictly positive.}
\end{thm}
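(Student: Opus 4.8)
The plan is to fix a vertex $a$ and connect the effective conductance $C(a\leftrightarrow\infty)$ directly to the return probability $P_a(T_a<\infty)$ that defines transience, by exploiting the probabilistic meaning of voltage in each finite approximating network. First I would work in the finite network obtained from $\mathbf{G}_n$ by collapsing $Z_n=\mathbf{G}\setminus\mathbf{G}_n$ to a single vertex $z_n$, hook up a battery between $a$ and $z_n$, and normalise the resulting voltage so that $v(a)=1$ and $v\equiv 0$ on $z_n$. The key structural fact is that this $v$ is the unique function harmonic on $\mathbf{G}_n\setminus\{a\}$ with these boundary values, and that such a function admits the probabilistic representation
\[
v(x)=P_x(\tau_a<\tau_{Z_n}),
\]
the probability that the walk started at $x$ reaches $a$ before $Z_n$.

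Given this, I would compute the current leaving $a$ using Ohm's law $i(a,x)=c(a,x)\bigl(v(a)-v(x)\bigr)$ together with $c(a,x)=\pi(a)p_{ax}$ and $1-v(x)=P_x(\tau_{Z_n}<\tau_a)$, obtaining
\[
\sum_{x\sim a}i(a,x)=\pi(a)\sum_{x\sim a}p_{ax}\,P_x(\tau_{Z_n}<\tau_a)=\pi(a)\,P_a(\tau_{Z_n}<T_a).
\]
Since $v(a)=1$, part (g) of the Definition then gives the finite-network identity
\[
C(a\leftrightarrow z_n)=\pi(a)\,P_a(\tau_{Z_n}<T_a),
\]
i.e.\ the effective conductance equals $\pi(a)$ times the probability of escaping to the boundary before returning to $a$.

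It then remains to pass to the limit. As $n$ grows the sets $Z_n$ shrink, so $\tau_{Z_n}$ increases and the events $\{\tau_{Z_n}<T_a\}$ decrease; using that each $\mathbf{G}_n$ is finite together with irreducibility, their intersection is exactly the escape event $\{T_a=\infty\}$. Continuity from above then yields
\[
C(a\leftrightarrow\infty)=\lim_n C(a\leftrightarrow z_n)=\pi(a)\,P_a(T_a=\infty)=\pi(a)\bigl(1-P_a(T_a<\infty)\bigr).
\]
Because $\pi(a)>0$, this shows $C(a\leftrightarrow\infty)>0$ if and only if $P_a(T_a<\infty)<1$, which is precisely transience at $a$; irreducibility makes both conditions independent of the chosen vertex, giving the ``any vertex'' form of the statement.

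I expect the main obstacle to be the careful justification of the two non-algebraic steps: establishing the voltage-as-hitting-probability representation, which rests on uniqueness of the Dirichlet solution on the finite collapsed network, and verifying that $\bigcap_n\{\tau_{Z_n}<T_a\}=\{T_a=\infty\}$ almost surely. The delicate inclusion is ``$T_a=\infty\Rightarrow$ the walk exits every finite $\mathbf{G}_n$'', which uses finiteness of $\mathbf{G}_n$ and irreducibility to rule out the walk being trapped in a finite set without ever returning to $a$.
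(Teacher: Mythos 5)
Your proposal is correct and follows essentially the same route as the paper: identify the voltage on the collapsed finite network with the hitting probability $P_x(\tau_a<\tau_{Z_n})$ via uniqueness of the harmonic extension, deduce $C(a\leftrightarrow z_n)=\pi(a)P[a\rightarrow Z_n]$, and pass to the limit to relate $C(a\leftrightarrow\infty)$ to $P_a(T_a=\infty)$. You supply somewhat more detail than the paper's sketch (the explicit Ohm's-law computation and the monotone-events argument for the limit), but the underlying argument is the same.
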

The idea of proof is the following (for details see \cite{LP}):\\
We start from the following proposition, called the Maximum Principle:\\
THE MAXIMUM PRINCIPLE: Let $\mathbf{G}$ be a finite graph, $\mathbf{H}\subseteq \mathbf{G}$, $\mathbf{H}$ connected and $\overline{\mathbf{H}}:=\{y\in \mathbf{G}:\exists x\in \mathbf{H}\text{ s.t. } y\sim x\}$. Let $f:\mathbf{G}\rightarrow \mathbb{R}$ $f$ harmonic on $\mathbf{H}$ and having $\max_{\mathbf{H}}f=\max_{\mathbf{G}}f$. Then $f/\overline{\mathbf{H}}=\max_{\mathbf{G}}f$. (If $f$ attains its maximum on a set where it is harmonic, then $f$ will be constant on that set).\\
By means of this Maximum Principle, we prove the so - called Uniqueness Principle:\\
THE UNIQUENESS PRINCIPLE: Let $\mathbf{G}$ be a finite graph, $\mathbf{A}\subset\mathbf{G}$ $f$ and $g$ two real functions on $\mathbf{G}$, harmonic on $\mathbf{H}$ and such that $f=g$ on $\mathbf{H}$. Then $f=g$. (A function is perfectly determined by its harmonicity on a certain set and by its values on the complementary of this set).\\
This principle will be used to prove the coincidence of the following two real functions defined on a finite graph $\mathbf{G}$:\\
$\cdot$ the function $F(x)=P_{x}(\tau_{a}<\tau_{z})$, where $\{a\}$ and $Z$ are fixed, disjoint subsets of $\mathbf{G}$.\\
$\cdot$ the voltage function $v$ that will appear on $\mathbf{G}$ as a consequence of hooking up a battery between $a$ and $Z$ with $v(a)=1$ and $v/Z=0$.\\
These functions are both harmonic on $\left(\{a\}\cup Z\right)$ and they take the same values on $\{a\}\cup Z$, so, according to the Uniqueness Principle, they will coincide. Thus we obtain the following probabilistic expression of the voltage function:
\[v(x)=P_{x}(\tau_{a}<\tau_{z}).\]
This fact will allow us to express the effective conductance from $a$ to $Z$ in a probabilistic way:
\[C(a\leftrightarrow Z)=\pi (a)P[a\rightarrow Z]\]
The chain is transient iff $P_{a}(T_{a}=\infty)>0 \quad \forall a$, that is iff $\lim_{n}P[a\rightarrow Z_{n}]>0$, where $Z_{n}$ are those in defenition 3.1 i). Now, using (3.1), we will obtain the required characterisation of transience.
\section{The electric network framwork (second part)}
The second part of the electric network framework of this paper will be describer through the following three definitions:
\begin{defn}
We consider an (electric) network having the following elements:\\
a)$\mathbf{G}$: \textbf{the vertex set.}\\
b)E: \textbf{the edges set} (E contains each edge with its both possible orientations).\\
c)$\breve{e}$: \textbf{the opposite} of the edge $e-e^{-}e^{+}$; $e^{+}$ is the \textbf{head} of $e$.\\
d)$l^{2}(\mathbf{}G)$: the Hilbert space of the \textbf{real functions on} $\mathbf{G}$ with the property $\sum_{x\in \mathbf{G}}f^{2}(x)<\infty$, with the scaler product
\[(f,g)=\sum_{x\in \mathbf{G}}f(x)g(x).\]
e)$l^{2}-(E)$: the Hilbert space of the \textbf{real antisymetric functions on E} with the property $\sum_{e\in E}\theta^{2}(e)<\infty$, with the scalar product
\[(\theta,\theta^{'})=\frac{1}{2}\sum_{e\in E}\theta (e)\theta^{'}(e).\]
f) the operator $d$:
\[d: \quad l^{2}(\mathbf{G})\rightarrow l^{2}-(E),\quad dF(e)=F(e^{-})-F(e^{+}).\]
g) the operator $d^{*}$:
\[d^{*}:\quad l^{2}-(E)\rightarrow l^{2}(\mathbf{G}), \quad d^{*}\theta (x)=\sum_{e^{-}=x}\theta (e).\]
\end{defn}
\begin{defn}
For this definition, we will assume that every function $\theta \in l^{2}-(E)$ represents a certain type of liquid that flows through the network. Let $\mathbf{G}$ be a finite network for the points a),...,f) and an infinite one for the point g).\\
a) \textbf{The quantity of liquid} (of type $\theta$) \textbf{that enters into the network through a vertex} $\mathbf{a}$ is by definition $d^{*}\theta (a)$.\\
b) For $A$ and $Z$ two disjoint, fixed subset of $\mathbf{G}$ (intuitively thought as a source and, respectively, \textbf{exit point} of the flow), we will call a function $\theta \in l^{2}-(E)$ a \textbf{flow from A to Z} if:
\[d^{*}\theta > 0 \textbf{ on } A\]
\[d^{*}\theta < 0 \textbf{ on } Z\]
\[d^{*}\theta = 0 \textbf{ on } (A\cup Z).\]
c) If $\theta$ is a flow from $A$ to $Z$. \textbf{the quantity of liquid that enters into the network} is by definition $\sum_{a \in A}d^{*}\theta (a)$.\\
d) If $\theta$ is a flow from $A$ to $Z$. \textbf{the quantity of liquid that flows out of the network} is by definition $\sum_{z \in Z}d^{*}\theta (z)$.\\
e) If $\theta$ is a flow from $A$ to $Z$, we define $\mathbf{strength(\theta)}$ to be $\sum_{a \in A}d^{*}\theta (a)$.\\
f) A \textbf{unit flow} is by definition a flow of strength $1$.\\
g) An antisymmetric function $\theta: \quad E\rightarrow\mathbb{R}$ is called a \textbf{unit flow from} $\mathbf{a}$ \textbf{to infinity} if $d^{*}\theta=1_{\{a\}}$.\\
\end{defn}
\begin{defn}
a) On the space $l^{2}-(E)$ we define the scalar product $(.;.)_{r}$ in the following way:
\[(\theta,\theta^{'})_{r}=\frac{1}{2}\sum_{e\in E}\theta (e)\theta^{'}(e)r(e).\]
b) \textbf{The energy} of an antisymmetric function $\theta: \quad E\rightarrow \mathbb{R}$ is defined to be
\[\varepsilon (\theta)=\parallel\theta\parallel_{r}^{2}.\]
c)We define \textbf{the unit flow along} $\mathbf{e}$ to be the function $\chi^{e}\in l^{2}-(E)$,
\[\chi^{e}:=1_{e}-1_{\breve{e}}.\]
d)We define the spaces
\[\begin{array}{c}
     \bigstar = sp\{\sum_{e^{-}=x}c(e)\chi^{e}; x\in \mathbf{G}\}\quad (\textbf{star space})\\
    \lozenge=sp\{\sum_{i=1}^{n}\chi^{e_{i}}; e_{1},...,e_{n}\in E \text{ oriented cycle; } n\geq 0\}\quad (\textbf{cycle space})
  \end{array}
\]
\end{defn}

In the above framework, it will be useful for the purpose of this paper to present the following characterisation of transience by means of flows (see [LP], theorem 2.10).
\begin{thm}
An infinite electric network $\mathbf{G}$ is \textbf{transient iff there is a unit flow on} $\mathbf{G}$ \textbf{of finite energy from any vertex to infinity.}
\end{thm}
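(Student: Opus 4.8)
The plan is to reduce the statement to the characterisation of transience by means of conductances established above, by identifying the effective resistance $R(a\leftrightarrow\infty)$ with the minimal energy of a unit flow from $a$ to infinity. Concretely, I would first establish Thomson's principle — that among all unit flows from $a$ to a set $Z$ in a finite network the electric current $i$ is the unique minimiser of the energy $\varepsilon$, with $\varepsilon(i)=R(a\leftrightarrow Z)$ — and then pass to the limit along the exhausting sequence $\mathbf{G}_n$ of Definition 3.1 i).

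The proof of Thomson's principle would use the star space $\bigstar$ and the cycle space $\lozenge$ of Definition 4.3 d). The key linear-algebra fact is that $l^{2}_{-}(E)$ decomposes as the orthogonal direct sum $\bigstar\oplus\lozenge$ with respect to the resistance inner product $(\cdot,\cdot)_r$; orthogonality is the statement that a current of the form $c\,dv$ pairs to zero with every cycle, which follows because $(c\,dv,\chi)_r=\frac12\sum_e dv(e)\chi(e)$ telescopes to zero around any closed loop. Given any unit flow $\theta$ from $a$ to $Z$, the current $i$ is exactly its orthogonal projection onto $\bigstar$, so $\theta-i\in\lozenge$ and Pythagoras gives $\varepsilon(\theta)=\varepsilon(i)+\varepsilon(\theta-i)\ge\varepsilon(i)$, with equality iff $\theta=i$. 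A short computation with $d^{*}$ and the definition of $C(a\leftrightarrow Z)$ then yields $\varepsilon(i)=R(a\leftrightarrow Z)$, so that $R(a\leftrightarrow Z)=\inf\{\varepsilon(\theta):\theta \text{ a unit flow from } a \text{ to } Z\}$.

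For the direction transient $\Rightarrow$ finite-energy flow, assume $\mathbf{G}$ is transient. By the conductance characterisation, $C(a\leftrightarrow\infty)>0$, that is $R(a\leftrightarrow\infty)=\lim_n R(a\leftrightarrow Z_n)<\infty$. Let $i_n$ be the unit current from $a$ to $Z_n$ in $\mathbf{G}^{(n)}$, extended by zero to all of $E$; by Thomson's principle $\varepsilon(i_n)=R(a\leftrightarrow Z_n)$ is bounded. Since bounded sets of the Hilbert space $(l^{2}_{-}(E),(\cdot,\cdot)_r)$ are weakly precompact, a subsequence of $(i_n)$ converges weakly to some $\theta$, and weak lower semicontinuity of the norm gives $\varepsilon(\theta)\le\liminf_n\varepsilon(i_n)\le R(a\leftrightarrow\infty)<\infty$. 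Finally one checks $d^{*}\theta=1_{\{a\}}$ by testing against finitely supported functions, using that $d^{*}i_n=1_{\{a\}}$ on $\mathbf{G}_n$ and that any fixed vertex is eventually interior to $\mathbf{G}_n$; hence $\theta$ is a unit flow from $a$ to infinity of finite energy.

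For the converse, let $\theta$ be a unit flow from $a$ to infinity with $\varepsilon(\theta)<\infty$. Restricting $\theta$ to the edges of $\mathbf{G}_n$ and routing the flow crossing into $Z_n$ through the identified vertex produces a unit flow from $a$ to $Z_n$ in $\mathbf{G}^{(n)}$ of energy at most $\varepsilon(\theta)$. Thomson's principle then gives $R(a\leftrightarrow Z_n)\le\varepsilon(\theta)$ for every $n$, so $R(a\leftrightarrow\infty)\le\varepsilon(\theta)<\infty$ and $C(a\leftrightarrow\infty)>0$; the conductance characterisation then makes $\mathbf{G}$ transient. I expect the main obstacle to be the forward limiting argument: verifying that the weak limit $\theta$ carries the correct divergence $d^{*}\theta=1_{\{a\}}$ rather than losing mass ``to infinity'', and justifying the interchange of the weak limit with $d^{*}$ at each fixed vertex. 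The identity $\varepsilon(i)=R(a\leftrightarrow Z)$ and the independence of the conclusion from the choice of vertex $a$ (which holds because the network is connected, so a finite-energy flow from one vertex can be corrected along a finite path to any other) are the remaining routine points.
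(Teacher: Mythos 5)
Your proposal is correct and follows essentially the same route as the paper's sketch: Thomson's principle obtained from the orthogonal decomposition $l^{2}_{-}(E)=\bigstar\oplus\lozenge$ with respect to $(\cdot,\cdot)_{r}$, the identity $\varepsilon(i)=R(a\leftrightarrow Z)$, and a passage to the limit along the exhausting sequence to connect with the conductance characterisation of transience. The only place you go beyond the paper's seven-step outline is in spelling out step 7 (the limiting argument via weak compactness and the verification that $d^{*}\theta=1_{\{a\}}$ survives the weak limit), and that is done correctly.
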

the steps of the proof are the following (for details see \cite{LP}):\\
1) Proving that the operators $d$ and $d^{*}$ from definition 4.1 f), g) are adjoint in the sense that
\[(\theta,dF)=(d^{*}\theta,F),\]
for any $\theta \in l^{2}-(E)$ and any $F\in l^{2}(\mathbf{G})$.\\
2) Writing down Ohm and Kirchhoff Laws by means of the operators $d$ and $d^{*}$:
\[\text{Ohm's Law: } dv=i.r\]
\[\text{Kirchhoff's Law: } d^{*}i(x)=0,\]
if $x$ is not connected to any battery.\\
3) Proving the two properties of flows:\\
a) $\sum_{a\in A}d^{*}\theta (a)=-\sum_{a\in Z}d^{*}\theta (a)$\\
(The quantity of liquid that enters into the network is equal to the quantity of liquid that flows out of the network).\\
b) $(\theta,dF)=Strength(\theta)[F(A)-F(Z)],$\\
for any real function $F$ that is constant on $A$ and $Z$.\\
4) Writing down Kirchhoff's Law by mean's of the scalar product $(.;.)_{r}$:
\[\text{The node's Law: }\left(\sum_{e^{-}=x}c(e)\chi^{e},i\right)_{r}=0,\]
if $x$ is not connected to any battery.
\[\text{The cycles's Law: }\left(\sum_{j=1}^{n}\chi^{ej},i\right)_{r}=0,\]
for any oriented cycle $e_{1},...,e_{n}$.\\
5) Proving that the space $l^{2}-(E)$ is the direct sum of the orthogonal subspaces $\bigstar$ and $\lozenge$.\\
6) Proving Thomson's Principle:\\
THOMSON'S PRINCIPLE: Let $\mathbf{G}$ be a finite network, $A$ and $Z$ two disjoint subsets of $\mathbf{G}$, $\theta$ a unit flow from $A$ to $Z$ and $i$ the electric current flow from $A$ to $Z$ with $d^{*}i=d^{*}\theta$. Then
\[\varepsilon (\theta)\geq \varepsilon (i).\]
(Of all the flow from $A$ to $Z$ having the same $d^{*}$, the electric current flow is the energy minimizes).\\
7)Completing the proof of the theorem by means of the previous six steps.

\section{The basic theorem}
The two characterisations of the transience of an infinite network presented in sections $3$ and $4$ are the background for the following theorem (see \cite{LP}, proposition 2.11), whose importance for our future purposes the following facts:\\
1) It extends Ohm's law from the finite to the infinite case.\\
2) It gives the electric expression of the hitting vector for an infinite network.\\
3) It gives the electric expression of the Green function for an infinite network.
\begin{thm}
Let $\mathbf{G}$ be a trasient network and $(\mathbf{G}_{n})_{n\geq 0}$ a sequence of finite subgraphs, containing a vertex a, that exhaust $\mathbf{G}$. We contract to $z_{n}$ the vertices outside $\mathbf{G}$, forming $\mathbf{G}^{(n)}$ (see Figure \ref{fig1}).\\
\end{thm}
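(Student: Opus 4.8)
The plan is to prove the three asserted facts by applying the finite-network theory of Sections 3 and 4 on each contracted graph $\mathbf{G}^{(n)}$ and then letting $n\to\infty$, the passage to the limit being governed by monotone convergence of hitting probabilities and occupation times together with the energy estimates that transience provides.

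First I would treat the voltage and the hitting vector. Hooking up a battery between $a$ and the contracted vertex $z_n$ with $v_n(a)=1$ and $v_n(z_n)=0$ gives, by the Uniqueness Principle of Section 3,
\[
v_n(x)=P_x(\tau_a<\tau_{Z_n}),\qquad Z_n:=\mathbf{G}\setminus\mathbf{G}_n,
\]
where $P_x$ is the walk on $\mathbf{G}$, since contracting $Z_n$ to a single vertex does not alter a trajectory before it first leaves $\mathbf{G}_n$. Because $\mathbf{G}_n\subseteq\mathbf{G}_{n+1}$, the events $\{\tau_a<\tau_{Z_n}\}$ increase with $n$; and since every fixed initial segment of a trajectory eventually lies inside $\mathbf{G}_n$, the exit times $\tau_{Z_n}$ increase to $\infty$, so that $\bigcup_n\{\tau_a<\tau_{Z_n}\}=\{\tau_a<\infty\}$. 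Monotone convergence then gives $v_n(x)\uparrow P_x(\tau_a<\infty)$, so $v(x):=\lim_n v_n(x)$ exists and equals the hitting vector $\nu^a$ normalized by $v(a)=1$.

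For the Green function I would combine the strong Markov property with the finite electric identity. Writing $G_n$ for the Green function of the walk killed at $z_n$, decomposing at the first visit to $a$ gives $G_n(x,a)=P_x(\tau_a<\tau_{Z_n})\,G_n(a,a)$, while the same reversibility computation that underlies Section 3 yields $G_n(a,a)=\pi(a)\,R(a\leftrightarrow z_n)$. As $n\to\infty$ the occupation-time series gives $G_n(x,a)\uparrow G(x,a)$, and $R(a\leftrightarrow z_n)\to R(a\leftrightarrow\infty)<\infty$ by Definition 3.1(i) and transience; hence $G(a,a)=\pi(a)\,R(a\leftrightarrow\infty)$ and, combining with the previous paragraph, $G(x,a)=\pi(a)\,R(a\leftrightarrow\infty)\,v(x)$.

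It remains to extend Ohm's law, and this is the step I expect to be the main obstacle. Here the orthogonal decomposition $l^{2}-(E)=\bigstar\oplus\lozenge$ and Thomson's Principle of Section 4 are indispensable. Let $i_n$ be the unit current from $a$ to $z_n$; by Thomson's Principle it minimizes energy among unit flows on $\mathbf{G}^{(n)}$, and $\varepsilon(i_n)=R(a\leftrightarrow z_n)$ is increasing and bounded above by $R(a\leftrightarrow\infty)<\infty$, this finiteness being exactly the finite-energy flow guaranteed by Theorem 4.1. A Pythagorean argument gives $\varepsilon(i_{n+1}-i_n)=\varepsilon(i_{n+1})-\varepsilon(i_n)\to 0$, so $\{i_n\}$ is Cauchy in $(\cdot,\cdot)_r$ and converges to a current $i\in l^{2}-(E)$; its potential $\hat v=R(a\leftrightarrow\infty)\,v$ (the battery voltage rescaled to unit current) then satisfies $d\hat v=i\,r$ edge by edge, which is Ohm's law for the infinite network, with $i$ the least-energy unit flow from $a$ to infinity. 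The whole difficulty is concentrated here: without the finite-energy flow secured by transience the energies $\varepsilon(i_n)$ could diverge and no limiting current would exist, and it is exactly at this point that the conductance criterion of Section 3, which delivers the voltage limit, and the flow criterion of Section 4, which delivers the current limit, must be used in tandem.
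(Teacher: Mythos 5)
Your outline is correct and is essentially the argument of the cited reference: the paper itself gives no proof of this theorem beyond the line ``For the proof see [LP]'', and your three steps (monotone convergence of the unit-voltage functions $v_n(x)=P_x(\tau_a<\tau_{Z_n})$ to the hitting vector, the finite-network identity $G_n(a,a)=\pi(a)R(a\leftrightarrow z_n)$ passed to the limit, and the Cauchy-in-energy argument for the unit currents via Thomson's Principle and the orthogonality $(i_{n+1}-i_n,i_n)_r=0$) are exactly the Lyons--Peres proof being pointed to. So this is the same approach, faithfully reconstructed, with only a harmless notational shift between the unit-voltage normalization you use and the unit-current normalization of the theorem's $v$.
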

Let $i_{n}$ be the unit current flow $i_{n} C^{n}$ from $a$ to $z_{n}$. Then $(i_{n})_{n}$ has a point wise limit $i$ on $\mathbf{G}$, that is the unique unit flow on $\mathbf{G}$ from $a$ to infinity of minimum energy.\\
Let $v_{n}$ be the voltages on $\mathbf{G}^{(n)}$ corresponding to $i_{n}$ and with $v_{n}(z_{n})=0$. Then $v:=\lim_{n}v_{n}$ exists on $\mathbf{G}$, is finite and has the properties:\\
1) $dv=ir$\\
2) $v(a)=\varepsilon (i)=R(a\leftrightarrow \infty)$\\
3) $\frac{v(x)}{v(a)}=P_{x}[\tau_{a}<\infty],\quad \forall x$.\\
Let's start in a the random walk on $\mathbf{G}$. For any $x$, the expected number of visits to $x$ is
\[\mathbf{G}(a,x)=\pi (x)v(x).\]
For any edge $e$, the expected signed number of crossings of $e$ is $i(e)$.\\
For the proof see [LP].
\begin{figure}
\includegraphics{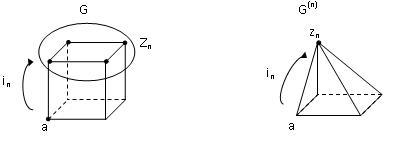}\\
\caption{}
\label{fig1}
\end{figure}

\section{The homogenous tree as an electric network}
\begin{defn}
a) A \textbf{tree} is a graph that is locally finite, connected and without loops. Notation: for $x$ and $y$ neighborn, we will write $x\sim y$.\\
b) A \textbf{path} is a finite or infinie sequence of vertices $[v_{0},v_{1},...]$ such that $v_{k}\sim v_{k+1}$ for any $k\geq 0$.\\
c) A \textbf{geodesic path} is a path $[v_{0},v_{1},...]$ such that $v_{k-1}\sim v_{k+1}$ for any $k\geq 0$.
\end{defn}
The tree we will consider will be \textbf{infinite} (most of the time), homogenous, of degree $q+1$ (each vertex has exactly $q+1$neighbours), $q \geq 2$. It can be viewed as in figures $\ref{fig2}$ or $\ref{fig3}$. We will denote by $0$ an arbitrary vertex, fixed as a root. We will denote by $T$ the vertex set.
\begin{figure}
\includegraphics{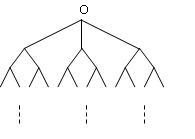}\\
\caption{}
\label{fig2}
\end{figure}
\begin{figure}
\includegraphics{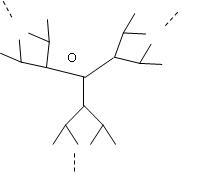}\\
\caption{}
\label{fig3}
\end{figure}

\begin{defn}
a) We define the distance $\mathbf{d(u,v)}$ between two vertices $u$ and $v$ to be the number of edges of the geodesic path from $u$ to $v$.\\
b)\textbf{The length of a vertex v} is by definition $d(0,v)$; it will be denoted by $|v|$.
\end{defn}
We will attach to each edge the conductance $1$. So the tree will become an electric network. The relation $\sum_{y \sim x}c(x,y)<\infty$ will be true for any vertex $x$, then, as in the introduction, we will associate a reversible Markov chain to this network. This is the chain of matrix $P=(p_{xy})_{x,y\in T}$,
\[p_{xy}=\begin{array}{c}
           \frac{1}{q+1},\text{ if } y\sim x \\
           0,\text{ otherwise}
         \end{array}
\]

It represents the simple random walk on the homogenous tree. Writing the relation $c(x,y)=\pi (x)p_{xy}$ for an arbitrary edge $xy$, we obtain the expression of the reversibility function $\pi$:
\[\pi:\quad T\rightarrow \mathbb{R}_{+}^{*}, \quad\quad \pi  (x)=q+1.\]
\begin{rem}
This tree has finite resistance to infinity it represents the prototype of network that has finite resistance to infinity.
\end{rem}
\begin{proof}
Let's compute the effective resistance from $0\text{ to } \infty$. According to definition $3.1$ j), we will have:\\
\[R(0\leftrightarrow \infty)=\lim_{n}R(0\leftrightarrow z_{n}),\]
where $z_{n}$ is the point which concentrates all the points of level $\geq n$.\\
During this procedure, we throw away the resulting loops, so it is just like concentrating in $z_{n}$ all the vertices of level $n$ (see figure \ref{fig4}).
\begin{figure}
\includegraphics{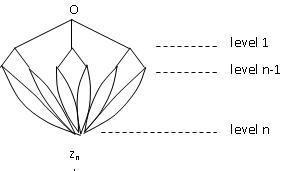}\\
\caption{}
\label{fig4}
\end{figure}
We have to transform this network successively into a simpler one, in order to compute $R(0\leftrightarrow z_{n})$ (see figure \ref{fig5})
\begin{figure}
\includegraphics{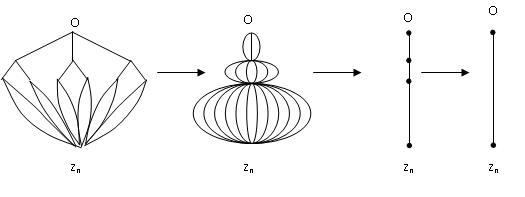}\\
\caption{}
\label{fig5}
\end{figure}
We have:\\
a) Unit conductances.\\
b) Unit conductances.\\
c) The conductances are respectively equal to $q+1$, $q(q+1)$, $q^2 (q+1)$,...\\
d) The resistance is equal to $\frac{1}{q+1}+\frac{1}{q(q+1)}+\frac{1}{q^{2}(q+1)}+...$\\
So,
\[R(0\leftrightarrow z_{n})=\frac{1}{q+1}.\frac{1-\left(\frac{1}{q}\right)^{2}}{1-\frac{1}{q}}\]
Consequently,
\[R(0\leftrightarrow z_{n})=\lim_{n}\frac{1}{q+1}.\frac{1-\left(\frac{1}{q}\right)^{2}}{1-\frac{1}{q}}=\frac{q}{q^2-1}<\infty.\]
\end{proof}
\begin{rem}
Having a finite resistance from any vertex to infinity, the homogenous tree will have a strictly positive conductance from any vertex to infinity. Thus, according to theorem 3.1, the associated Markov chain will be transient.
\end{rem}
\section{New results}
We present now the final purpose of this paper: results upon the electric network of the homogenous tree.We first prove a proposition that will express voltages and currents intensities as functions of distance to the root.
\begin{prop}
Let $T$ be the transient network given by the infinite homogenous tree of degree q+1. We take root $0$ to be the reference point in theorem 5.1. Then:\\
a) The limit function $i$ given by the same theorem is:
\[i(x,y)=\begin{array}{c}
           \frac{1}{q+1}.\frac{1}{q^{\mid x \mid}}, \quad \forall x,y\in T, \quad y\sim x, \mid y\mid > \mid x\mid\\
           -\frac{1}{q+1}.\frac{1}{q^{\mid x\mid -1}}, \quad x,y \in T, \quad y\sim x, \mid y\mid <\mid x\mid
         \end{array}
\]
The limit function $v$ given by the same theorem is:
\[v(x)=\frac{q}{q^2-1}.q^{-\mid x\mid},\quad \forall x\in T.\]
\end{prop}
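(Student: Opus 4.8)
The plan is to determine the current $i$ directly on the truncated networks $\mathbf{G}^{(n)}$ by exploiting the symmetry of the tree, and then to pass to the limit using Theorem 5.1. First I would fix $n$ and consider the unit current flow $i_n$ from the root $0$ to the contracted vertex $z_n$ in $\mathbf{G}^{(n)}$. By Thomson's principle (step 6 in the proof of Theorem 4.2) this flow is the unique unit flow from $0$ to $z_n$ of minimal energy. Any automorphism $\phi$ of $T$ fixing the root $0$ descends to an automorphism of $\mathbf{G}^{(n)}$ that fixes $0$ and $z_n$ and preserves the unit resistances, hence preserves both the constraint and the energy; by uniqueness of the minimiser, $i_n$ is invariant under $\phi$. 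Since the root-fixing automorphisms act transitively on the edges joining level $k$ to level $k+1$ (for every $0 \le k \le n-1$, including the parallel edges created by the contraction at level $n-1$), the current $i_n$ takes a common value $c_k$ on all such edges, oriented away from the root.

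Next I would pin down the numbers $c_k$. Kirchhoff's node law $d^{*}i_n(x)=0$ at a vertex $x$ with $1 \le |x| \le n-1$ reads: the single incoming edge from the parent carries $c_{|x|-1}$ while the $q$ outgoing edges to the children each carry $c_{|x|}$, so $c_{|x|-1} = q\,c_{|x|}$. At the root the unit-flow condition $d^{*}i_n(0)=1$ gives $(q+1)c_0 = 1$. Solving the recursion yields $c_k = \frac{1}{(q+1)q^{k}}$, a value independent of $n$. Hence for each fixed edge the sequence $i_n$ is eventually constant, so its pointwise limit $i$ of Theorem 5.1 satisfies $i(x,y) = \frac{1}{(q+1)q^{|x|}}$ whenever $y \sim x$ and $|y|>|x|$; antisymmetry of $i$ together with $|y| = |x|-1$ for the opposite orientation then gives $i(y,x) = -\frac{1}{(q+1)q^{|y|-1}}$, which is exactly the stated formula.

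Finally I would recover $v$ from the current. With unit resistances Ohm's law (property 1 of Theorem 5.1) reads $v(x)-v(y) = i(x,y)$ along the edge oriented from $x$ to $y$. Fixing $x$ and a geodesic ray $x=x_0 \sim x_1 \sim \cdots$ moving away from the root, with $x_j$ at level $|x|+j$, telescoping gives $v(x) - \lim_{j} v(x_j) = \sum_{j \ge 0} c_{|x|+j}$. The limit vanishes because the boundary values $v_n(z_n)=0$ force $v \to 0$ at infinity, so $v(x) = \sum_{k \ge |x|} \frac{1}{(q+1)q^{k}} = \frac{1}{(q+1)q^{|x|}}\cdot\frac{q}{q-1} = \frac{q}{q^2-1}\,q^{-|x|}$. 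As a consistency check, at $x=0$ this gives $v(0)=\frac{q}{q^2-1}=R(0\leftrightarrow\infty)$, in agreement with property 2 of Theorem 5.1 and the effective-resistance computation preceding it.

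The main obstacle I expect is not the algebra but the justification of the symmetry reduction: one must argue carefully that the finite current flow is genuinely invariant under root-fixing automorphisms. This rests on the uniqueness of the energy minimiser and on checking that the graph $\mathbf{G}^{(n)}$ obtained after contraction — which contains parallel edges to $z_n$ and discarded loops — still carries a transitive action on each level of edges. Once invariance is established, the remaining steps are a short recursion and a geometric series, and the passage to the limit is immediate since each $c_k$ is already independent of $n$.
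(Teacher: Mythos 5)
Your proposal is correct in substance and follows the same overall strategy as the paper: exhaust $T$ by the balls $T_n$, contract the exterior to $z_n$, use symmetry to show that the unit current $i_n$ takes a common value $c_k$ on every edge from level $k$ to level $k+1$, determine $c_k=\frac{1}{(q+1)q^{k}}$ from Kirchhoff's node law together with the unit-strength condition at the root, pass to the (eventually constant) pointwise limit, and recover $v$ from Ohm's law. The one genuine difference in part a) is how the symmetry is justified: the paper argues that the $q+1$ branches at the root (and, recursively, the $q$ branches at each later vertex) have equal effective resistance to $z_n$, so that ``Ohm's law written $q+1$ times'' forces the current to split equally; you instead invoke invariance of $i_n$ under root-fixing automorphisms of $T^{(n)}$, justified by uniqueness of the energy minimiser in Thomson's principle. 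Your route is cleaner and arguably more rigorous, but note that the paper states Thomson's principle only as an inequality $\varepsilon(\theta)\geq\varepsilon(i)$, so you must supply the uniqueness yourself (strict convexity of $\varepsilon$ on the affine set $\{\theta:\ d^{*}\theta=\delta_{0}-\delta_{z_n}\}$); once that is said, the transitivity of the stabiliser of $0$ on each level of edges does the rest.

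For the voltage you also diverge from the paper, and here there is a small gap. The paper anchors the computation at the root, using $v(0)=R(0\leftrightarrow\infty)=\frac{q}{q^{2}-1}$ from Theorem 5.1 and inducting outward via Ohm's law. You telescope toward infinity along a ray and assert that $v(x_j)\to 0$ ``because $v_n(z_n)=0$''; as stated this does not follow, since $z_n$ is not a vertex of $T$ and the pointwise limit $v=\lim_n v_n$ need not a priori inherit a zero boundary value at infinity. The repair is already in your hands: $v(x_j)$ is nonnegative and strictly decreasing along the ray, hence converges to some $L\geq 0$ with $v(0)=\sum_{k\geq 0}c_k+L$; since $v(0)=R(0\leftrightarrow\infty)=\frac{q}{q^{2}-1}=\sum_{k\geq 0}c_k$ by Theorem 5.1 and the computation of Remark 6.1, one gets $L=0$. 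Equivalently, anchor the telescoping at the root as the paper does, and what you call a consistency check becomes the proof. With that one line added, your argument is complete.
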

\begin{proof}
a) Let $(T_{n})_{n\geq 0}$ be the following sequence of finite subtrees, containing $0$, that exhaust $T$:
\[T_{0},\quad T_{n}=\{x\in T, \mid x\mid \leq n\},\quad \forall n\geq 1.\]
We contract to $z_{n}$ the vertices outside $T_{n}$ and denote $T^{(n)}$ the union between $T_{n}$ and $\{z_{n}\}$. (see \ref{fig6}). Let in be the unit current flow in $T^{(n)}$ from $0$ to $z_{n}$. Let $v_{n}$ be the voltage on $T^{(n)}$ corresponding to $i_{n}$ and with $v_{n}(z_{n})=0$. Let $T_{1}^{(n)},...,T_{q+1}^{(n)}$ be the $q+1$ circuits that appear between $0$ and $z_{n}$ when we hook up a battery between $0$ and $z_{n}$, with $v_{n}(0)>0$ and $v_{n}(z_{n})=0$ (see figure \ref{fig7}).\\
The homogenity of the edges resistors will imply:
\[R^{T_{1}^{n}}(0\leftrightarrow z_{n})=...=R^{T_{q+1}^{n}}(0\leftrightarrow z_{n}).\]
Hence, according to Ohm's Law, written $q+1$ times between $0$ and $z_{n}$,
\[i_{n}(0,x_{1})=...=i_{n}(0,x_{q+1}),\]
$x_{1},...,x_{q+1}$ being the points of level 1.\\
We know $d^{*}i_{n}=1$, so we will have:
\[i_{n}(0,x_{1})=...=i_{n}(0,x_{q+1})=\frac{1}{q+1}.\]
Consequently, Ohm's Law written respectively for $0$ and $x_{1},...,0$ and $x_{q+1}$ will imply:
\[v_{n}(x_{1})=...=v_{n}(x_{q+1})\]
The above procedure for the points $0$ and $z_{n}$ will be now applied, successively, to the points $x_{1}$ and $z_{n},...,x_{q+1}$ and $z_{n}$. We will obtain the intensities of the currents passing between the first two levels:
\[i_{n}(x_{1},...,y_{1})=...=i_{n}(x_{q+1},...,y_{q(q+1)})=\frac{1}{q+1}.\frac{1}{q},\]
where we have denoted by $y_{1},...,y_{q(q+1)}$ the points of the second level.\\
We obtain in a similar manner the intensities of the currents going between any two consecutive levels:
\[i_{n}(x,y)=\frac{1}{q+1}.\frac{1}{q^{\mid x\mid}}, \quad \forall x,y, x\sim y, \mid y\mid=\mid x\mid + 1.\]
Let us now consider arbitrary $x$ and $y$, $x\sim y$, $\mid y\mid=\mid x\mid +1$ $(T_{n})_{n}$ exhaust $T$, sowe can find $n_{0}$ such that $x,y\in T_{n_{0}}$. Applying the above procedure to each $T_{n}$, $n\geq n_{0}$, we get:
\[i_{n}(x,y)=\frac{1}{q+1}.\frac{1}{q^{\mid x\mid}}, \quad \forall n\geq n_{0},\]
hence, by passing to the limit,
\[i_{n}(x,y)=\frac{1}{q+1}.\frac{1}{q^{\mid x\mid}}.\]
We will also have:
\[i_{n}(x,y)=-i_{n}(y,x)=\frac{1}{q+1}.\frac{1}{q^{\mid x\mid}}=-\frac{1}{q+1}.\frac{1}{q^{\mid y\mid-1}}\]
so we have proved a).
b) The proof will be completed by induction after $\mid x\mid$.\\
I. $\mid x\mid=1$: Ohm's Law between $0$ and $x$ will imply
\[v(0)-v(x)=i(0,x).1\]
But $v(0)=R(0\leftrightarrow \infty)=\frac{q}{q^{2}-1}$ according to theorem 5.1 and $i(0,x)=\frac{1}{q+1}$ according to a), so we get
\[\frac{q}{q^{2}-1}-v(x)=\frac{1}{q+1}.\]
Hence
\[v(x)=\frac{q}{q^2-1}.\frac{1}{q}\]
II. $\mid x\mid \sim \mid y \mid +1$
Ohm's law written between $x$ and $y$, $x\sim y$, $\mid y\mid=\mid x\mid + 1$, will imply:
\[v(x)-v(y)=i(x,y).1\]
But we have $v(x)=\frac{q}{q^{2}-1}.\frac{1}{q^\mid x\mid}$ and $i(x,y)=\frac{1}{q+1}.\frac{1}{q^{x}}$  according to a), so
\[v(y)=\frac{q}{q^{2}-1}.\frac{1}{q^{\mid x\mid}}-\frac{1}{q+1}.\frac{1}{q^{\mid x\mid}}=\frac{q}{q^{2}-1}.\frac{1}{q^{\mid x\mid +1}}.\]
I and II will imply
\[v(x)=\frac{q}{q^2-1}.\frac{1}{q^{\mid x\mid}}\quad \forall x\]
\end{proof}
\begin{figure}
\includegraphics{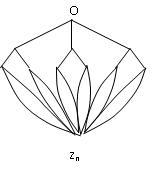}\\
\caption{}
\label{fig6}
\end{figure}
\begin{figure}
\includegraphics{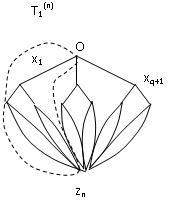}\\
\caption{}
\label{fig7}
\end{figure}
The corollary to come will present the particular form of the Green function for the transient network of the homogenous tree. Thus we arrive to the particular form of this function mentioned in \cite{GS}, Section 2.
\begin{cor}
Let $T$ be the transient network of the infinite homogenous tree off degree $q+1$ and $\mathbf{G}(.;.)$ the associated Green function. Then $\mathbf{G}(.;.)$ has the following expression:
\[\mathbf{G}(a,x)=\frac{q}{q-1}.q^{-d(a,x)},\quad \forall a,x\in T.\]
\end{cor}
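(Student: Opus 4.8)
The plan is to read the Green function directly off the machinery already assembled, combining the visit-count formula of Theorem 5.1 with the explicit voltage computed in Proposition 7.1. Theorem 5.1 asserts that when the walk starts at $a$, the expected number of visits to $x$ equals $\mathbf{G}(a,x)=\pi(x)v(x)$, where $v$ is the limit voltage with $a$ as reference point. So the whole task reduces to inserting the known value of $v(x)$ together with the constant $\pi(x)$; no new limiting argument is needed beyond those already carried out in Sections 5 and 7.

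First I would record the two ingredients. From the unit-conductance assignment on the tree we have $\pi(x)=q+1$ for every vertex $x$. From Proposition 7.1, with the reference point taken at $a$, the voltage is $v(x)=\frac{q}{q^2-1}\,q^{-d(a,x)}$, because the length $|x|$ appearing there is exactly the distance $d(a,x)$ once $a$ plays the role of the root.

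The one point that needs care, and which I regard as the only genuine step, is that Proposition 7.1 was proved with the fixed root $0$ as reference, whereas the corollary is stated for an arbitrary source $a$. Here I would invoke the vertex-transitivity of the homogeneous tree: its automorphism group acts transitively on vertices, so there is a graph automorphism carrying $0$ to $a$, under which the unit conductances, the exhaustion $(T_n)_n$, the unit current flow, and hence the limit voltage are all preserved. Consequently the conclusion of Proposition 7.1 holds verbatim with $a$ in place of $0$ and $d(a,\cdot)$ in place of $|\cdot|$; in particular $v(a)=R(a\leftrightarrow\infty)=\frac{q}{q^2-1}$ is the same for every choice of $a$.

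It then only remains to substitute and simplify. Using $q^2-1=(q-1)(q+1)$,
\[
\mathbf{G}(a,x)=\pi(x)v(x)=(q+1)\cdot\frac{q}{q^2-1}\,q^{-d(a,x)}=\frac{q}{q-1}\,q^{-d(a,x)},
\]
which is precisely the claimed formula. The main obstacle is thus purely the transfer of the root from $0$ to $a$ via homogeneity; once that is in hand the corollary is a one-line computation.
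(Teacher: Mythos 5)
Your proposal is correct and follows essentially the same route as the paper: apply the identity $\mathbf{G}(a,x)=\pi(x)v(x)$ from Theorem 5.1, substitute $\pi(x)=q+1$ and the voltage from Proposition 7.1, and transfer from the root $0$ to an arbitrary $a$ by homogeneity (the paper compresses this last step into the phrase that ``$0$ may be arbitrarily chosen,'' which you merely make explicit via vertex-transitivity).
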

\begin{proof}
Theorem 5.1 asserts that
\[\mathbf{G}(0,x)=\pi(x)v(x),\]
so according to proposition 7.1 b), we will get
\[\mathbf{G}(0,x)=(q+1).\frac{q}{q^{2}-1}.\frac{1}{q^{\mid x\mid}}=\frac{q}{q-1}.q^{-\mid x\mid}.\]
Writing this in the form
\[G(0,x)=\frac{q}{q-1}.q^{-d(0,x)}\]
and taking into account the fact that $0$ may be arbitrarily chosen, we get:
\[G(a,x)=\frac{q}{q-1}.q^{-d(a,x)}\quad \forall a,x\in T.\]
\end{proof}
The following proposition will express the hitting times and the transitions number on the homogenous tree network as functions of the distance to the root.
\begin{prop}
Let $T$ be the transient network of the infinite homogenous tree of degree $q+1$. Then\\
a) The hitting vector of the root is
\[\left(q^{-\mid x\mid}\right)_{x\in T}\]
b) The hitting vector of an arbitrary point $a\in T$ is
\[\left(q^{-d(a,x)}\right)_{x\in T}.\]
c) For any starting point $a$, the expected number of transitions of an edge $xy$ is
\[E_{a}[S_{xy}]=\frac{q}{q^{2}-1}.q^{-d(a,x)}.\]
\end{prop}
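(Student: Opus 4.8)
The plan is to leverage the electric-network results already established, treating parts (a) and (b) as essentially the same statement (with (a) the special case $a=0$) and using the Green function from Corollary~7.2 to extract the hitting probabilities and expected edge-transition counts. For the hitting vector, I would start from the probabilistic expression of voltage established in Section~3, namely $v(x)/v(a)=P_x[\tau_a<\infty]$, which is item~3 of Theorem~5.1. Since Proposition~7.1(b) gives $v(x)=\frac{q}{q^2-1}q^{-|x|}$ for the root reference point, the ratio $v(x)/v(0)$ collapses to $q^{-|x|}$, since $v(0)=\frac{q}{q^2-1}$. This immediately yields part (a). For part (b), I would invoke the remark made throughout Section~7 that the root $0$ is arbitrary: by translation-invariance of the homogeneous tree, the identical computation centered at $a$ gives $P_x[\tau_a<\infty]=q^{-d(a,x)}$, replacing $|x|=d(0,x)$ by $d(a,x)$.

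For part (c), the key identity is the last assertion of Theorem~5.1: the expected signed number of crossings of an edge $e$ equals $i(e)$, and the expected number of visits to $x$ is $\mathbf{G}(a,x)=\pi(x)v(x)$. However, $S_{xy}$ counts the number of transitions (one-directional crossings) from $x$ to $y$, not the signed net current. To handle this, I would relate $E_a[S_{xy}]$ to the Green function via the standard identity $E_a[S_{xy}]=\mathbf{G}(a,x)\,p_{xy}$, which follows because each visit to $x$ is independently followed by a step to $y$ with probability $p_{xy}=\frac{1}{q+1}$. Using Corollary~7.2, $\mathbf{G}(a,x)=\frac{q}{q-1}q^{-d(a,x)}$, I would then compute
\[
E_a[S_{xy}]=\mathbf{G}(a,x)\cdot p_{xy}=\frac{q}{q-1}\,q^{-d(a,x)}\cdot\frac{1}{q+1}=\frac{q}{q^2-1}\,q^{-d(a,x)},
\]
which is exactly the claimed formula.

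The main obstacle I anticipate is part (c), specifically justifying the identity $E_a[S_{xy}]=\mathbf{G}(a,x)\,p_{xy}$ rigorously and distinguishing it carefully from the signed-crossing interpretation $i(e)$ given in Theorem~5.1. The signed current $i(e)$ records the net directed flow and can differ substantially from the raw transition count $S_{xy}$, especially on a tree where back-and-forth motion is common; so I must not conflate the two. The cleanest route is to expand $S_{xy}=\sum_{n\geq 0}\mathbf{1}_{\{X_n=x,\,X_{n+1}=y\}}$ and take expectations, using the Markov property to factor each term as $P_a(X_n=x)\,p_{xy}$ and then summing $\sum_{n\geq 0}P_a(X_n=x)=\mathbf{G}(a,x)$ by the series definition of the Green function given in Section~2. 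Everything else reduces to substituting the closed forms from Corollary~7.2 and Proposition~7.1, which are routine.
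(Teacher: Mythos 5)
Your proposal is correct and follows essentially the same route as the paper: parts (a) and (b) via $P_x[\tau_a<\infty]=v(x)/v(a)$ from Theorem 5.1 combined with the voltage formula of Proposition 7.1 and the homogeneity of the tree, and part (c) via the identity $E_a[S_{xy}]=\mathbf{G}(a,x)p_{xy}$ with the Green function from Corollary 7.2. Your explicit justification of $E_a[S_{xy}]=\mathbf{G}(a,x)p_{xy}$ through the Markov property is in fact slightly more careful than the paper's one-line computation.
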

\begin{proof}
a) According to theorem 5.1, the hitting of the root is
\[Px(\tau_{a}<\infty)=\frac{v(x)}{v(0)}.\]
Using the voltages expressions given by proposition 7.1, we obtain
\[Px(\tau_{0}<\infty)=\frac{\frac{q}{q^{2}-1}.q^{-\mid x\mid}}{\frac{q}{q^{2}-1}}=q^{-\mid x\mid}.\]
b) Using the similarity property of the vertices (we have no intrinsec way to distinguish a vertex from another) and noticing that $\mid x\mid=d(0,x)$, we generalise a) writing that
\[\forall a, \forall x : \quad P_{x}(\tau_{a}<\infty)=q^{-d(a,x)}.\]
c) We have:
\[\forall a,\quad E_{a}[S_{xy}]=E_{a}[\sum_{n\geq 0}1_{\{x_{n}=x,x_{n+1}=y\}}]=E_{a}[\sum_{n\geq 0}1_{x_{n}=a}]p_{xy}=\mathbf{G}(a,x)p_{xy}\]
Using the particular form of the Green function for the homogenous tree and the relation $p_{xy}=\frac{1}{q+1}$, we get:
\[E_{a}[S_{xy}]=\frac{q}{q-1}.q^{-d(a,x)}.\frac{1}{q+1}=\frac{q}{q^{2}-1}/q^{-d(a,x)}.\]
\end{proof}

\begin{rem}
This expected number of transitions is exactly the voltage that appears in $x$ when the unit injection of current is made in $a$.
\end{rem}
Let $\{a\}$ and $Z$ be two disjoint subsets of the finite homogenous tree $T$. The last proposition will express the probability of reaching $Z$ before $a$ as a function of the degree and of the distance. The finite homogenous tree is obtained from the infinite one by keeping only the first $n$ levels.The edges conductances are still equal to one $P^{'}$, the transition matrix of the associated Markov chain, will obviously be finite. It can be expressed in the following way as function of the matrix $P$ of the infinite network:
\[p^{'}_{xy}=\begin{array}{c}
               1,\text{ for } x \text{ on level } n \text{ and } y\sim x \\
               p_{xy}, \text{ otherwise}.
             \end{array}
\]
On this finite tree, the effective resistance between two points $a$ and $x$ is exactly the distance between them. We can see this as follows: we hook up a battery between $a$ and $x$ and write Kirchhoff's Law for all vertices starting with the level $n$ ones and going up to those on the geodesic path between $a$ and $x$. The result will be that no current goes through any edge expert those on this geodesic path. So we can remove the edges outside it reducing the circuit to the resistors of the geodesic path from $a$ to $x$. The effective resistance from $a$ to $x$ will be the sum of their resistances. This is exactely $d(a,x)$.
\begin{prop}
Let $T$ be the transient network of the finite homogenous tree of degree $q+1$ and $0$ its root. We will have:
a) When $Z$ equals the level $n$ vertex set,
\[P[0\rightarrow Z]=q^{n-1}.\frac{q-1}{q^{n}-1}. \text{ see \ref{fig8}}\]
b) When the levels number is $n$, $a$ i an arbitrary level $n$ point and $Z$ equals the union of the subtrees rooted in $0$ and not containing $a$,
\[P[a\rightarrow Z]=\frac{1}{n}. \text{ see \ref{fig9}}\]
c) When the levels number is greater than $n$, $a$ is an arbitrary level $n$ point and $Z$ is the one in b),
\[P[a\rightarrow Z]=\frac{1}{n(q+1)}. (\text{see figure \ref{fig10}})\]
d)When $a$ is a terminal vertex and $x$ is arbitrary,
\[P[a\rightarrow x]=\frac{1}{d(a,x)}. (\text{see figure \ref{fig11}})\]
e)When $a$ is not terminal and $x$ is arbitrary,
\[P[a\rightarrow x]=\frac{1}{d(a,x).(q+1)}. (\text{see figure \ref{fig12}})\]
\begin{figure}
\includegraphics{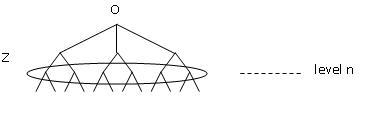}\\
\caption{}
\label{fig8}
\end{figure}
\begin{figure}
\includegraphics{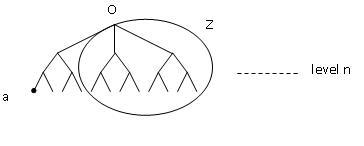}\\
\caption{}
\label{fig9}
\end{figure}
\begin{figure}
\includegraphics{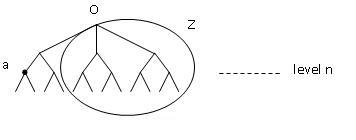}\\
\caption{}
\label{fig10}
\end{figure}
\begin{figure}
\includegraphics{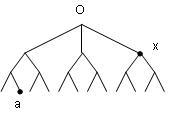}\\
\caption{}
\label{fig11}
\end{figure}
\begin{figure}
\includegraphics{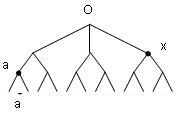}\\
\caption{}
\label{fig12}
\end{figure}
\end{prop}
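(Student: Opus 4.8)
The unifying tool is the relation $C(a\leftrightarrow Z)=\pi(a)\,P[a\to Z]$ derived in Section~3, which I would rewrite as
\[
P[a\to Z]=\frac{C(a\leftrightarrow Z)}{\pi(a)}=\frac{1}{\pi(a)\,R(a\leftrightarrow Z)}.
\]
Thus every part reduces to computing just two quantities: the value of the reversibility function at the source and the effective resistance from the source to $Z$. Since all conductances equal $1$, we have $\pi(a)=\deg(a)$, which is $q+1$ for an interior vertex (in particular the root) and $1$ for a terminal level-$n$ vertex. This dichotomy in $\pi(a)$ is exactly what separates b) from c) and d) from e), so the strategy is to pin down $R(a\leftrightarrow Z)$ once in each geometric situation and then insert the correct $\pi(a)\in\{1,q+1\}$.

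I would dispose first of the single-target and cut-vertex cases, since they collapse immediately onto the fact, established just before the proposition, that the effective resistance between two vertices of the finite tree equals their distance. For d) and e), $Z=\{x\}$, so $R(a\leftrightarrow x)=d(a,x)$ and the master formula yields $1/d(a,x)$ or $1/\big(d(a,x)(q+1)\big)$ according to whether $a$ is terminal. For b) and c), the root $0$ lies in $Z$ and separates $a$ from $Z\setminus\{0\}$, so the walk started at $a$ cannot meet any point of $Z$ before visiting $0$; hence $P[a\to Z]=P[a\to 0]$. Because $R(a\leftrightarrow 0)=d(a,0)=n$, the formula gives $1/n$ when $a$ is terminal (case b, exactly $n$ levels) and $1/\big(n(q+1)\big)$ when $a$ is interior (case c, more than $n$ levels).

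Part a) is the only one that needs a genuine network reduction rather than a bare distance. Here $a=0$ and $Z$ is the entire level-$n$ set, which I short to a single node $z_n$; by symmetry this is precisely the series--parallel collapse already performed when computing $R(0\leftrightarrow z_n)$, giving $R(0\leftrightarrow z_n)=\tfrac{1}{q+1}\cdot\tfrac{1-q^{-n}}{1-q^{-1}}$. Inverting to obtain $C(0\leftrightarrow Z)$ and dividing by $\pi(0)=q+1$, the geometric factors simplify to $q^{\,n-1}\,\tfrac{q-1}{q^{n}-1}$, as claimed.

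The only delicate point — the expected main obstacle — is justifying the resistance computations against a \emph{set} rather than a point: that shorting $Z$ does not drop the resistance below $d(a,0)$ in b)/c), and that the symmetric collapse in a) is legitimate. Both rest on the Kirchhoff ``no current off the geodesic'' argument already invoked before the proposition together with the interchangeability of the $q+1$ branches at $0$: in a) equal currents split among the branches so the ladder telescopes, while in b)/c) all current leaving $a$ is forced down the single geodesic to $0\in Z$ and none flows past $0$. Once these effective resistances are secured, each identity follows by a one-line substitution into the master formula.
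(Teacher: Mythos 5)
Your proposal is correct and takes essentially the same route as the paper: the identity $P[a\to Z]=1/\bigl(\pi(a)\,R(a\leftrightarrow Z)\bigr)$ from Section 3, the series--parallel collapse of Remark 6.1 for a), the reduction $P[a\to Z]=P[a\to 0]$ for b)/c), and the resistance-equals-distance fact for d)/e), with the dichotomy $\pi(a)\in\{1,q+1\}$ separating the terminal from the interior cases. If anything you justify more than the paper does (the separation argument at $0$ and the degree computation are merely asserted there), so no further changes are needed.
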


\begin{proof}
We will have:\\
a) According to 3.1,
\[P[0\rightarrow Z]=\frac{C(0\leftrightarrow Z)}{\pi (0)}=\frac{1}{R(0\leftrightarrow Z)\pi (0)}\]
$R(0\leftrightarrow Z)$ is given by remark 6.1:
\[R(0\leftrightarrow Z)=\frac{1}{q+1}.\frac{1-\left(\frac{1}{q}\right)^{n}}{1-\frac{1}{q}}=\frac{1}{q^{2}-1}.\frac{q^{n}-1}{q^{n-1}}\]
$\pi (0)=q+1$, so we get:
\[R(0\leftrightarrow Z)=(q^{2}-1).\frac{q^{n-1}}{q^{n}-1}.\frac{1}{q+1}=q^{n-1}.\frac{q-1}{q^{n}-1}.\]
b) $P[a\rightarrow Z]=P[a\rightarrow 0]=\frac{C(a\leftrightarrow 0)}{\pi (a)}=\frac{1}{R(a\leftrightarrow 0)\pi (a)}=\frac{1}{n.1}=\frac{1}{n}$\\
c)$P[a\rightarrow Z]=\frac{1}{R(a\leftrightarrow 0)\pi (a)}=\frac{1}{n(q+1)}$\\
d) $P[a\rightarrow x]=\frac{C(a\leftrightarrow x)}{\pi (a)}=\frac{1}{R(a\leftrightarrow x)}.\frac{1}{\pi (a)}=\frac{1}{d(a,x).1}=\frac{1}{d(a,x)}$\\
e) $P[a\rightarrow x]=\frac{1}{R(a\leftrightarrow x)\pi (a)}=\frac{1}{d(a,x)(q+1)}$
\end{proof}
\bibliographystyle{amsplain}
\bibliography{docbib}


\end{document}